\numberwithin{equation}{section}
\numberwithin{figure}{section}
\theoremstyle{plain}
\newtheorem{thm}{\protect\theoremname}
\theoremstyle{plain}
\newtheorem{lem}{\protect\lemmaname}
\providecommand{\lemmaname}{Lemma}
\providecommand{\theoremname}{Theorem}
\begin{document}
\title{Upper Bounds For Families Without Weak Delta-Systems}
\author{Eric Naslund}
\date{\today}
\email{naslund.math@gmail.com}
\begin{abstract}
For $k\geq3$, a collection of $k$ sets is said to form a \emph{weak
$\Delta$-system} if the intersection of any two sets from the collection
has the same size. Erd\H{o}s and Szemer\'{e}di asked about the size
of the largest family $\mathcal{F}$ of subsets of $\{1,\dots,n\}$
that does not contain a weak $\Delta$-system. In this note we improve
upon the best upper bound due to the author and Sawin, and show that
\[
|\mathcal{F}|\leq\left(\frac{2}{3}\Theta(C)+o(1)\right)^{n}
\]
 where $\Theta(C)$ is the capset capacity. In particular, this shows
that
\[
|\mathcal{F}|\leq(1.8367\ldots+o(1))^{n}.
\]
\end{abstract}

\maketitle

\section{Introduction}

A collection of $k$ sets for $k\geq3$ is said to form a \emph{$k$-sunflower,
}or a $\Delta$\emph{-system,} if the intersection of any two sets
from the collection is the same. This notion was introduced in 1960
by Erd\H{o}s and Rado \cite{ErdosRado1960SunflowerSystems}, and
they famously conjectured that for $k\geq3$, there exists $C_{k}>0$
depending on $k$, such that any $k$-sunflower-free family $\mathcal{F}$
of sets of size $m$ satisfies 
\[
|\mathcal{F}|\leq C_{k}^{m}.
\]
This was one of Erd\H{o}s's favorite problems, for which he offered
$\$1000$ \cite[Problem 90]{Chung1997OpenProblemsOfPaulErdosGraphTheory},
and while still out of reach, progress was made recently by Alweiss,
Lovett, Wu and Zhang \cite{AlweissLovettWuZhang2021ImprovedSunflowers}
(see also \cite{Rao2020CodingForSunflowers,Tao2020BlogSunflowers,BellTolsonChueluecha2021NoteOnSunflowerFurtherImprovementsRao,Rao2023SunflowerExposition}).

In 1974, Erd\H{o}s, E. Milner, and Rado \cite{ErdosMilnerRado1974WeakDeltaSystems}
introduced the related notion of a weak $\Delta$-system (see \cite{Kostochka1998ExtremalProblemsDeltaSystems}
for a survey). For $k\geq3$, a collection of $k$ sets is said to
form a \emph{weak $\Delta$}-\emph{system of size $k$ }if the intersection
of any two sets from the collection has the same size. Let $G_{k}(n)$
denote the size of the largest family of subsets of $\{1,\dots,n\}$
that does not contain a weak $\Delta$-system of size $k$. Erd\H{o}s
and Szemer\'{e}di asked about the growth rate of $G_{k}(n)$ \cite{ErdosSzemeredi1978UpperBoundsForSunflowerFreeSets}
(see also \cite[Problem 94]{Chung1997OpenProblemsOfPaulErdosGraphTheory}),
and in their paper they gave the lower bound 
\[
G_{k}(n)\geq n^{\frac{\log n}{4\log\log n}}.
\]
The current best lower bound, due to Kostochka and R\"{o}dl \cite{KostochkaRodl1998LowerBoundWeakDealtaSystems}
(which improved upon \cite{RodlThoma1997WeakDeltaSystemsLowerBound}),
is
\[
G_{k}(n)\geq k^{c(n\log n)^{\frac{1}{3}}}.
\]
Frankl and R\"{o}dl \cite{FranklRodl1987FranklRodlTheorem} resolved
a conjecture of Erd\H{o}s and Szemer\'{e}di, and proved that for
every $k$, there exists $\varepsilon_{k}>0$ such that $G_{k}(n)<(2-\varepsilon_{k})^{n}$.
In the case $k=3$, the stronger upper bound 
\begin{equation}
G_{3}(n)\leq\left(\frac{3}{2^{2/3}}+o(1)\right)^{n}=\left(1.889881\ldots+o(1)\right)^{n}\label{eq:sunflower_free_upper_bound}
\end{equation}
is a consequence of the upper bound for $3$-sunflower-free families
due to the author and Sawin \cite{NaslundSawin2017Sunflower}. That
result was proven using the \emph{slice-rank method}, which was introduced
by Tao \cite{TaosBlogCapsets} following the polynomial method breakthrough
of Croot, Lev, and Pach \cite{CrootLevPachZ4}, and Ellenberg and
Gijswijt \cite{EllenbergGijswijtCapsets}. Throughout, we refer to
a $3$-sunflower-free family simply as \emph{sunflower-free }since
it contains no $k$-sunflower for any $k$. The bound for sunflower-free
sets achieved in \cite{NaslundSawin2017Sunflower} cannot be improved
without a substantial change in approach, since the result also applies
to multicolored sunflower-free sets (see \cite{BlasiakChurchCohnGrochowNaslundSawinUmans2016MatrixMultiplication}
for a definition). The multicolored lower bounds from the work of
Kleinberg, Sawin, and Speyer, \cite{KleinbergSawinSpeyer2017TheGrowthRateOfTriColoredSumFreeSets},
and Pebody \cite{Pebody2017ProofOfKleinbergSawinSpeyerConjecture},
imply that for multicolored sunflower-free sets, the bound in equation
(\ref{eq:sunflower_free_upper_bound}) is optimal up to sub-exponential
factors. Note that if a family does not contain a weak $\Delta$-system
for $k=3$, then it does not contain a weak $\Delta$-system for any
$k$.

In this paper we relate the size of the largest family that does not
contain a weak $\Delta$-system in $\{0,1\}^{n}$ to the size of the
largest capset in $\mathbb{F}_{3}^{n}$, and improve upon the upper
bound for $G_{3}(n)$. A set $A\subset\mathbb{F}_{3}^{n}$ is called
a \emph{capset} if there is no triple $x,y,z\in A$, not all equal,
such that $x+y+z=0\pmod{3}$. Equivalently, $A$ is a capset if there
does not exist a triple $x,y,z$, not all equal, such that for every
coordinate $i$, 
\[
\{x_{i},y_{i},z_{i}\}\in\{\{0,1,2\},\{0,0,0\},\{1,1,1\},\{2,2,2\}\}.
\]
Let $C_{n}$ denote the size of largest capset in $\mathbb{F}_{3}^{n}$,
and define the \emph{capset capacity, }$\Theta(C)$, to be 
\[
\Theta(C)=\limsup_{n\rightarrow\infty}\left(C_{n}\right)^{\frac{1}{n}}.
\]
In particular, Ellenberg and Gijswijt proved that 
\begin{equation}
\Theta(C)\leq\min_{0<t<1}t^{-\frac{2}{3}}(1+t+t^{2})=\frac{3}{8}\sqrt[3]{207+33\sqrt{33}}=2.7551046\ldots.\label{eq:ellenberg_gijswijt_bound}
\end{equation}
The notation $\Theta(C)$ is used since this quantity is precisely
the Shannon Capacity of the hypergraph with three elements and one
edge, see \cite{ChristandlFawziOmarTa2022LargerCornerFreeSets} for
details on this notation. Our main result is:
\begin{thm}
\label{thm:main_weak_delta_system_thm}Let $X$ be a set of size $|X|=n$,
and let $\mathcal{F}$ be a collection of subsets of $X$, and suppose
that $\mathcal{F}$ does not contain a weak $\Delta$-system. Then
\[
|\mathcal{F}|\leq\left(\frac{2}{3}\Theta(C)+o(1)\right)^{n},
\]
where $\Theta(C)$ is the capset capacity. In particular due to (\ref{eq:ellenberg_gijswijt_bound})
we have that 
\begin{equation}
|\mathcal{F}|\leq\left(\frac{1}{4}\sqrt[3]{207+33\sqrt{33}}+o(1)\right)^{n}=(1.8367\ldots+o(1))^{n}.\label{eq:weak_delta_numerical_upper_bound}
\end{equation}
\end{thm}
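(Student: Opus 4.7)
The plan is to extend the slice-rank / partition-rank polynomial method used by Naslund and Sawin \cite{NaslundSawin2017Sunflower} from sunflower-free families to weak-$\Delta$-system-free families, obtaining the improved bound through a reduction to the capset problem in $\mathbb{F}_3^n$ that invokes the Ellenberg-Gijswijt bound (\ref{eq:ellenberg_gijswijt_bound}) as a black box.

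I would first reduce to the case where every $A\in\mathcal{F}$ has a common cardinality $s$, losing only a polynomial factor $n+1$: each size-layer $\mathcal{F}_s=\{A\in\mathcal{F}:|A|=s\}$ inherits the weak-$\Delta$-system-free property, and there are only $n+1$ layers. For three equal-size sets $A,B,C$, the weak $\Delta$-system condition $|A\cap B|=|A\cap C|=|B\cap C|$ is equivalent to the equality $n_{110}=n_{101}=n_{011}$ of the ``exactly-two-of-three'' coordinate-pattern counts, where $n_\sigma=\#\{i:(\mathbb{1}_A(i),\mathbb{1}_B(i),\mathbb{1}_C(i))=\sigma\}$ for $\sigma\in\{0,1\}^3$; and by comparing the three equations $|A|=|B|=|C|=s$, this also forces the equality of the singleton counts $n_{100}=n_{010}=n_{001}$.

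Next, for each $A\in\mathcal{F}_s$ I would construct a fibre $V_A\subseteq\mathbb{F}_3^n$ so that: (i) the fibres are pairwise disjoint, (ii) each has size $|V_A|=(3/2)^{n(1-o(1))}$, and (iii) the union $V=\bigcup_{A\in\mathcal{F}_s} V_A$ is capset-free whenever $\mathcal{F}_s$ contains no weak $\Delta$-system. Granted such a construction, the Ellenberg-Gijswijt bound yields
\[
|\mathcal{F}_s|\cdot(3/2)^{n(1-o(1))}\leq |V|\leq (\Theta(C)+o(1))^n,
\]
so $|\mathcal{F}_s|\leq\left(\tfrac{2}{3}\Theta(C)+o(1)\right)^n$, and the numerical bound (\ref{eq:weak_delta_numerical_upper_bound}) follows from (\ref{eq:ellenberg_gijswijt_bound}).

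The hard part will be the construction of the fibres $V_A$. The sunflower condition is purely coordinate-wise (no coordinate may lie in exactly two of $\{A,B,C\}$) and is captured by the simple fibre $V_A=\{v\in\mathbb{F}_3^n:v_i\in\{1,2\}\text{ if }i\in A,\ v_i=0\text{ otherwise}\}$. But the weak $\Delta$-system condition is a \emph{global} equality of three sums, so the fibre must be enlarged so as to encode both pattern-symmetries $n_{110}=n_{101}=n_{011}$ and $n_{100}=n_{010}=n_{001}$ as coordinate-wise capset equations. The extra degree of freedom needed per coordinate is precisely what yields the factor $3/2=|\mathbb{F}_3|/|\{0,1\}|$ in the fibre size, and the most delicate verification will be that any non-trivial capset triple $(u,v,w)\in V_A\times V_B\times V_C$ in the enlarged $V$ forces $(A,B,C)$ to be a weak $\Delta$-system, via a case analysis on the coordinate-wise capset solutions at each of the pattern types (coordinates in $0$, $1$, $2$, or $3$ of the sets).
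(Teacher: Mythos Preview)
Your reduction to a single weight class and the characterisation of the weak-$\Delta$-system condition on equal-size sets via the pattern counts $n_\sigma$ are correct and agree with the paper. The genuine gap is the fibre construction itself: you never produce the sets $V_A$, and your sketch gives no mechanism by which a coordinate-wise equation $u+v+w\equiv 0\pmod 3$ could enforce the \emph{global} equalities $n_{110}=n_{101}=n_{011}$. The capset condition constrains only the pattern type at each individual coordinate, not the counts of each type across all coordinates, so a fibre built coordinate-by-coordinate has no evident way to detect a condition on sums. Moreover, since $s$ arises by pigeonhole and is out of your control, you would need a construction working uniformly in $s$ with fibre size $(3/2)^{n(1-o(1))}$; the sunflower fibre you cite has size $2^s$, which misses this target for generic $s$, and ``enlarging'' it while preserving disjointness and the capset property is exactly the step you leave unspecified.

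The paper sidesteps this global-to-local obstacle with an extra averaging step you do not have. After the same single-weight reduction it rewrites the condition as forbidding equilateral triangles in $\{0,1\}^n$, notes that the maps $y\mapsto y\oplus x$ are isometries, and averages over all $x\in\{0,1\}^n$ to obtain $|A|\le 2^n\,\delta_\Delta(B)$ for \emph{every} test set $B$, where $\delta_\Delta(B)$ is the maximum density of an equilateral-triangle-free subset of $B$. One may then \emph{choose} $B=B_{\lfloor n/3\rfloor}$: on any fixed-weight layer a sunflower is automatically equilateral, so an equilateral-triangle-free subset of $B_k$ is sunflower-free, and for sunflower-free sets the simple preimage fibre $F^{-1}(a)=\{v:v_i=1\text{ if }a_i=1,\ v_i\in\{0,2\}\text{ if }a_i=0\}$ (freedom on the zero coordinates, the opposite of your sunflower fibre) yields a capset of size $2^{n-k}$. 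Optimising $2^n\cdot\Theta(C)^n/\bigl(2^{n-k}\binom{n}{k}\bigr)$ over $k$ gives $k\approx n/3$ and the bound $(2\Theta(C)/3+o(1))^n$. The translation-averaging step is precisely what converts the global problem on an uncontrolled weight layer into the local sunflower problem on the specific layer $n/3$, and is the missing ingredient in your plan.
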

Equivalently, Theorem \ref{thm:main_weak_delta_system_thm} states
that 
\[
G_{3}(n)\leq\left(\frac{2}{3}\Theta(C)+o(1)\right)^{n}.
\]
To prove this, we examine sets without non-trivial equilateral triangles
in $\{0,1\}^{n}$, where an \emph{equilateral triangle} is a triple
$x,y,z\in\mathbb{R}^{n}$ such that $\|x-y\|=\|y-z\|=\|z-x\|$, and
it is said to be trivial\emph{ }if $x=y=z$. In the next section,
we prove the following upper bound:
\begin{thm}
\label{thm:equilateral_triangles_upper}Let $A\subset\{0,1\}^{n}$
that does not contain a non-trivial equilateral triangle. Then $|A|\leq(\frac{2}{3}\Theta(C)+o(1))^{n}$
where $\Theta(C)$ is the capset capacity.
\end{thm}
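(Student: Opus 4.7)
The plan is to apply the slice-rank method of Croot--Lev--Pach, Ellenberg--Gijswijt, and Tao to a tensor $T(x,y,z)$ on $(\{0,1\}^n)^3$ encoding the equilateral-triangle condition, then bound the slice rank by a per-coordinate monomial count that gains a factor of $(2/3)^n$ over the Ellenberg--Gijswijt capset bound, the extra factor coming from the Boolean restriction.

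First I would fix a prime $p > 2n$ and, using the identity $(a-b)^2 = a + b - 2ab$ for $a, b \in \{0,1\}$, express
\[
s := d(x,y) - d(y,z) = \sum_{i=1}^n (x_i - z_i)(1 - 2y_i), \qquad t := d(y,z) - d(x,z) = \sum_{i=1}^n (y_i - x_i)(1 - 2z_i)
\]
as polynomials in $\{x_i, y_i, z_i\}$, and define $T(x,y,z) = (1 - s^{p-1})(1 - t^{p-1}) \in \mathbb{F}_p$. Since $|s|, |t| \leq n < p$, Fermat's little theorem gives $T(x,y,z) = 1$ precisely when $(x,y,z)$ is equilateral and $T(x,y,z) = 0$ otherwise. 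Since $T$ is nonzero on the diagonal and vanishes off the diagonal on $A^3$ (by the equilateral-free hypothesis), Tao's slice-rank inequality yields $|A| \leq \slicerank(T|_{A^3})$.

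To bound the slice rank, I would expand $T$ as a polynomial in the Boolean variables and reduce modulo $x_i^2 = x_i$, $y_i^2 = y_i$, $z_i^2 = z_i$. The crucial algebraic observation is that $s_i := (x_i - z_i)(1 - 2y_i)$ satisfies $s_i^2 = (x_i - z_i)^2 \cdot (1 - 2y_i)^2 = (x_i + z_i - 2 x_i z_i) \cdot 1 = x_i + z_i - 2 x_i z_i$ on the Boolean cube, since $(1 - 2 y_i)^2 = 1$ using $y_i^2 = y_i$; hence $s_i^3 = s_i$ and every power $s_i^k$ reduces to one of $\{1, s_i, s_i^2\}$, with the analogous periodicity holding for $t_i$. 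Applying this in the multinomial expansions of $s^{p-1}$ and $t^{p-1}$ and carefully tracking the per-coordinate monomials that survive in $T$, I would then follow the Ellenberg--Gijswijt strategy of assigning each monomial to the variable with smallest total degree (at most $2n/3$ by pigeonhole, since the total degree is at most $2n$), thereby obtaining
\[
\slicerank(T|_{A^3}) \leq 3 \min_{0 < \tau < 1} \tau^{-2n/3} \left(\tfrac{2}{3}(1 + \tau + \tau^2)\right)^n = 3 \left(\tfrac{2}{3}\Theta(C)\right)^n,
\]
which yields the theorem after absorbing the constant $3$ into the $(1+o(1))^n$ factor.

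The main obstacle lies in the per-coordinate monomial enumeration that produces the generating function $\tfrac{2}{3}(1 + \tau + \tau^2)$. Concretely, one must verify that, after all the cancellations and Boolean reductions in $(1 - s^{p-1})(1 - t^{p-1})$, the effective per-coordinate ``slice-rank monomials'' correspond, up to a multiplicative factor of $2/3$, to the capset monomials appearing in the Ellenberg--Gijswijt polynomial $1 - (x_i + y_i + z_i)^2$ on $\mathbb{F}_3^n$. The factor $2/3$ ultimately reflects the density $|\{0,1\}|/|\mathbb{F}_3| = 2/3$, and the appearance of the $(1 + \tau + \tau^2)$ rather than the naive $(1 + \tau)$ is precisely what lets the present argument improve on the sunflower-free bound of \cite{NaslundSawin2017Sunflower}; rigorously producing this generating function requires a delicate combinatorial bookkeeping of the expansion of $s^{p-1} t^{p-1}$ in terms of $\{1, s_i, s_i^2\}$ and $\{1, t_i, t_i^2\}$.
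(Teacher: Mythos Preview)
Your approach is genuinely different from the paper's, and it has a conceptual gap that prevents it from proving the theorem as stated. The theorem bounds $|A|$ in terms of the capset capacity $\Theta(C)=\limsup_n C_n^{1/n}$, not the Ellenberg--Gijswijt constant $\min_{0<\tau<1}\tau^{-2/3}(1+\tau+\tau^2)$. A slice-rank computation can only ever output an explicit numerical bound; even if your monomial count went through perfectly, you would obtain $|A|\le 3\bigl(\tfrac{2}{3}\cdot 2.7551\ldots\bigr)^n$, which is weaker than $(\tfrac{2}{3}\Theta(C)+o(1))^n$ and would not tighten if the capset bound were ever improved. The paper avoids slice rank entirely. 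It first shows, via the isometries $y\mapsto x+y\pmod 2$ and an averaging argument, that $|A|\le 2^n\delta_\Delta(B)$ for \emph{any} $B\subset\{0,1\}^n$, where $\delta_\Delta(B)$ is the maximum relative density of an equilateral-triangle-free subset of $B$. Taking $B=B_{n/3}$, the weight-$n/3$ slice, equilateral-triangle-free there implies sunflower-free, and the paper then \emph{constructs} an actual capset in $\mathbb{F}_3^n$: the preimage $F^{-1}(A')$ under the coordinatewise map $F\colon\mathbb{F}_3\to\{0,1\}$ sending $0,2\mapsto 0$ and $1\mapsto 1$. Since each point of $A'\subset B_{n/3}$ has $2^{2n/3}$ preimages, one gets $|A'|\cdot 2^{2n/3}\le C_n\le\Theta(C)^n$, hence $\delta_\Delta(B_{n/3})\le(\Theta(C)/3+o(1))^n$, and the theorem follows.

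Separately, the step you label ``the main obstacle'' is not residual bookkeeping but the entire content of the improvement, and as written it does not hold. After Boolean reduction each $x_i$ has degree at most $1$, so the $x$-part of any monomial of $T$ is a subset $S\subset[n]$; the per-coordinate generating function in $\tau$ tracking the $x$-degree is therefore of the form $a+b\tau$, not $\tfrac{2}{3}(1+\tau+\tau^2)$. If one just uses that the total degree of $T$ is at most $2n$ and pigeonholes to $x$-degree $\le 2n/3$, the resulting slice-rank bound is $3\sum_{k\le 2n/3}\binom{n}{k}=(3/2^{2/3}+o(1))^n$, which is exactly the Naslund--Sawin sunflower bound you are trying to beat. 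The heuristic ``$|\{0,1\}|/|\mathbb{F}_3|=2/3$'' is not an argument, and nothing in your outline indicates where an additional saving over $3/2^{2/3}$ would come from within a direct slice-rank computation on $\{0,1\}^n$.
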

Since we are working in $\{0,1\}^{n}$, coordinate-wise distances
are either $0$ or $1$, and so the above result holds for any $L^{p}$
norm. Let us begin by deducing Theorem \ref{thm:main_weak_delta_system_thm}
from Theorem \ref{thm:equilateral_triangles_upper}.
\begin{proof}[Proof of Theorem \ref{thm:main_weak_delta_system_thm} assuming Theorem
\ref{thm:equilateral_triangles_upper}]
 Let $\mathcal{F}$ be a family of subsets of $\{1,\dots,n\}$, and
suppose that $\mathcal{F}$ does not contain a weak $\Delta$-system.
Every subset $A\subset\{1,2,\dots.n\}$ corresponds to a vector $x\in\{0,1\}^{n}$
where $x_{i}=1$ if and only if $i\in A$, and so our family $\mathcal{F}$
corresponds to a set $A\subset\{0,1\}^{n}$. In this setting, three
vectors $x,y,z\in\{0,1\}^{n}$ form a weak $\Delta$-system if and
only if $\langle x,y\rangle=\langle y,z\rangle=\langle z,x\rangle$.
For $x\in\{0,1\}^{n}$, the \emph{weight }of $x$ is defined to be
the number of non-zero entries. If $x,y,z$ all have the same weight
$w$, then $\|x\|_{2}^{2}=\|y\|_{2}^{2}=\|z\|_{2}^{2}$, and so $x,y,z$
form a weak $\Delta$-system if and only if $\|x-y\|_{2}^{2}=\|y-z\|_{2}^{2}=\|z-x\|^{2}$,
that is, if and only if $x,y,z$ form an equilateral triangle. Let
$A_{w}$ denote the elements of $A$ of weight $w$. Since $\sum_{w=0}^{n}|A_{w}|=|A|$,
there must exist $w$ such that $|A_{w}|\geq\frac{|A|}{n+1}$. Then
$A_{w}$ is a set that does not contain an equilateral triangle, and
so by Theorem \ref{thm:equilateral_triangles_upper}, $|A_{w}|\leq(\frac{2}{3}\Theta(C))^{n}$,
and the proof is complete.
\end{proof}

\section{Subsets of $\{0,1\}^{n}$ Avoiding Equilateral Triangles}

To prove Theorem \ref{thm:equilateral_triangles_upper}, we first
prove a lemma that allows us to upper bound the density of the largest
set without equilateral triangles in $\{0,1\}^{n}$ by the the relative
density of the largest set without equilateral triangles inside \emph{any
subset} of $\{0,1\}^{n}$. Then we define a mapping that allows us
to use the Ellenberg-Gijswijt capset bound to upper bound the density
of the largest set without equilateral triangles among the elements
of weight $w$.

For any $x\in\{0,1\}^{n}$ define the map $f_{x}:\{0,1\}^{n}\rightarrow\{0,1\}^{n}$
by 
\[
f_{x}(y)=x+y\pmod{2}.
\]

\begin{lem}
\label{lem:function_lemma}For any $x\in\{0,1\}^{n}$, $f_{x}$ is
an isometry. That is, for any $y,z\in\{0,1\}^{n}$ we have that $\|y-z\|=\|f_{x}(y)-f_{x}(z)\|$.
\end{lem}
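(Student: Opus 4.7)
The plan is to reduce the statement to a coordinate-by-coordinate check. In $\{0,1\}^n$ every reasonable norm (and in particular every $\ell_p$ norm used in the paper, since the difference $y-z$ has entries in $\{-1,0,1\}$) depends only on which coordinates $y$ and $z$ disagree on. So it suffices to show that
\[
\{i : f_x(y)_i \ne f_x(z)_i\} \;=\; \{i : y_i \ne z_i\},
\]
after which $\|y-z\|$ and $\|f_x(y)-f_x(z)\|$ are computed from identical data.

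To prove this set equality, I would fix a coordinate $i$ and split into the two cases $x_i = 0$ and $x_i = 1$. If $x_i = 0$, then $f_x(y)_i = y_i$ and $f_x(z)_i = z_i$, so the two vectors disagree at coordinate $i$ iff $y$ and $z$ do. If $x_i = 1$, then $f_x(y)_i = 1 - y_i$ and $f_x(z)_i = 1 - z_i$, and $1 - y_i = 1 - z_i$ iff $y_i = z_i$, so again the disagreement sets match at coordinate $i$. Taking the union over $i$ gives the desired set equality, and hence $\|y-z\| = \|f_x(y) - f_x(z)\|$ in any coordinate-wise norm.

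There is really no obstacle here: the lemma is a direct verification using that addition mod $2$ is, on each coordinate, either the identity or the involution $t \mapsto 1 - t$, both of which preserve the indicator of disagreement between two bits. The only mild subtlety is noting explicitly that the paper's unspecified norm $\|\cdot\|$ is invariant under permutations of the coordinates where $y$ and $z$ agree versus disagree, which the authors have already flagged in the paragraph immediately preceding the lemma.
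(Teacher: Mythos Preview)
Your proof is correct and is essentially the same coordinate-by-coordinate verification as in the paper. The only cosmetic difference is that you split on the value of $x_i$ while the paper splits on whether $y_i=z_i$ or $y_i\neq z_i$; both case splits immediately show that the disagreement set $\{i:y_i\neq z_i\}$ is preserved by $f_x$, which is all that is needed.
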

\begin{proof}
Given $x,y,z$, examine coordinate by coordinate. If $y_{i}=z_{i}$,
then $x_{i}+y_{i}=x_{i}+z_{i}\pmod{2}$, and so the distance is still
$0$. If $y_{i}\neq z_{i}$, then $x_{i}+y_{i}\neq x_{i}+z_{i}\pmod{2}$,
and so once again the distance is preserved.
\end{proof}
For any set $B\subset\{0,1\}^{n}$, let $w_{\Delta}(B)$ denote the
size of the largest subset of $B$ that does not contain an equilateral
triangle, and define 
\[
\delta_{\Delta}(B)=\frac{w_{\Delta}(B)}{|B|}.
\]
Note that if $B$ does not contain any equilateral triangles, then
$\delta_{\Delta}(B)=1$.
\begin{lem}
\label{lem:relative_density_lemma}Let $A\subset\{0,1\}^{n}$ that
does not contain an equilateral triangle. Then 
\[
|A|\leq2^{n}\min_{B\subset\{0,1\}^{n}}\delta_{\Delta}(B).
\]
\end{lem}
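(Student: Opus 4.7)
The plan is to use a translation-averaging argument based on the isometries $f_x$. For any subset $B \subset \{0,1\}^n$, I want to show the single inequality $|A| \leq 2^n \delta_\Delta(B)$, after which taking the minimum over $B$ gives the result.

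First I would fix $B$ and consider, for each $x \in \{0,1\}^n$, the translated set $f_x(A)$. By Lemma \ref{lem:function_lemma}, $f_x$ is an isometry, so $f_x(A)$ is again a subset of $\{0,1\}^n$ that contains no equilateral triangle. In particular, the intersection $f_x(A) \cap B$ is an equilateral-triangle-free subset of $B$, and hence has cardinality at most $w_\Delta(B) = |B| \cdot \delta_\Delta(B)$.

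The second step is a straightforward double-counting. Summing over all $x \in \{0,1\}^n$,
\[
\sum_{x \in \{0,1\}^n} |f_x(A) \cap B| \;\leq\; 2^n |B| \,\delta_\Delta(B).
\]
On the other hand, for each pair $(a,b) \in A \times B$ there is exactly one $x \in \{0,1\}^n$ satisfying $f_x(a) = b$, namely $x = a + b \pmod 2$. Therefore the left-hand side equals $|A| \cdot |B|$, and cancelling $|B|$ yields $|A| \leq 2^n \delta_\Delta(B)$. Since $B$ was arbitrary, the claim follows.

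I do not anticipate a major obstacle here: the only conceptual ingredient is that the translations $f_x$ act transitively on $\{0,1\}^n$ while preserving the property of being equilateral-triangle-free, and the counting is routine. The real work is postponed to the next step of the paper, where one must actually exhibit or control a set $B$ whose equilateral-triangle-free density $\delta_\Delta(B)$ is small enough to bring in the capset capacity $\Theta(C)$.
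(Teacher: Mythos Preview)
Your proposal is correct and follows essentially the same approach as the paper: both use the double-counting identity $\sum_{x}|f_x(A)\cap B|=|A|\,|B|$ together with Lemma~\ref{lem:function_lemma} to conclude that each $f_x(A)\cap B$ is equilateral-triangle-free and hence has size at most $|B|\,\delta_\Delta(B)$. The only cosmetic difference is that the paper invokes pigeonhole to find a single $x$ with $|f_x(A)\cap B|\ge |A||B|/2^n$, whereas you sum the inequality over all $x$; these are equivalent.
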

\begin{proof}
Let $A,B\subset\{0,1\}^{n}$ be given, and suppose that $A$ does
not contain an equilateral triangle. For every element $x\in\{0,1\}^{n}$
consider $f_{x}(A)\cap B$. For each pair of elements, $a\in A,b\in B$,
there is one and only one element $x\in\{0,1\}^{n}$ such that $f_{x}(a)=b$,
which implies that 
\[
\sum_{x\in\{0,1\}^{n}}|f_{x}(A)\cap B|=|A||B|,
\]
and hence there exists $x\in\{0,1\}^{n}$ such that 
\[
\frac{|A|}{2^{n}}\leq\frac{|f_{x}(A)\cap B|}{|B|}.
\]
Since $A$ does not contain an equilateral triangle, by Lemma \ref{lem:function_lemma},
neither does $f_{x}(A)$. Hence
\[
\frac{|f_{x}(A)\cap B|}{|B|}\leq\delta_{\Delta}(B),
\]
by definition of $\delta_{\Delta}$, and the lemma follows.
\end{proof}
We say that a subset of $\{0,1\}^{n}$ is \emph{sunflower-free} if
it does not contain three elements $x,y,z$, not all equal, such that
$\{x_{i},y_{i},z_{i}\}\in\{\{0,0,0\},\{1,1,1\},\{0,0,1\}\}$ for every
$i$. Note that in this definition of sunflower-free, we do allow
triples where two of the three are equal, and so for example $A=\{(0,1),(1,1)\}$
is not sunflower-free since $(0,1),(0,1),(1,1)$ form a sunflower.
For a set of vectors of a fixed weight, this definition of sunflower-free
is the same if the three vectors are required to be distinct. Consider
the map $F:\mathbb{F}_{3}^{n}\rightarrow\{0,1\}^{n}$ defined coordinate-wise
by $F_{i}(0)=0$, $F_{i}(1)=1$ and $F_{i}(2)=0$ for each $i$.
\begin{lem}
\label{lem:inverse_map_sunflower_capset}Let $A\subset\{0,1\}^{n}$
be a sunflower-free set. Then $F^{-1}(A)$ is a capset.
\end{lem}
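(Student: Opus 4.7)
The plan is to prove the contrapositive: assuming $x, y, z \in F^{-1}(A)$, not all equal, satisfy $x+y+z \equiv 0 \pmod 3$, I will exhibit a sunflower in $A$. The argument splits into two pieces: a coordinate-by-coordinate check that the local sunflower condition holds for $F(x), F(y), F(z)$, and a verification that $F(x), F(y), F(z)$ are not all equal.

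For the first step, the capset condition forces, at each coordinate $i$, the multiset $\{x_i, y_i, z_i\}$ to equal one of $\{0,0,0\}, \{1,1,1\}, \{2,2,2\}, \{0,1,2\}$. Since $F$ sends $0, 2 \mapsto 0$ and $1 \mapsto 1$ coordinatewise, these four multisets are pushed forward to $\{0,0,0\}, \{1,1,1\}, \{0,0,0\}, \{0,0,1\}$ respectively, all of which lie in the sunflower list $\{\{0,0,0\}, \{1,1,1\}, \{0,0,1\}\}$. Hence $\{F_i(x_i), F_i(y_i), F_i(z_i)\}$ satisfies the local sunflower condition at every coordinate.

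For the nontriviality, I would show the contrapositive: if $F(x) = F(y) = F(z) = a$ then $x = y = z$. At coordinates where $a_i = 1$, the three entries lie in $F^{-1}(1) = \{1\}$, forcing $x_i = y_i = z_i = 1$. At coordinates where $a_i = 0$, the three entries lie in $F^{-1}(0) = \{0, 2\}$; the only capset multisets with entries in this set are $\{0,0,0\}$ and $\{2,2,2\}$, each of which forces $x_i = y_i = z_i$. Combining coordinates yields $x = y = z$, contradicting the assumption that the triple was not all equal.

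The only place that requires a moment of care is this nontriviality check — a priori $F$ could collapse a genuine capset triple into a single point — but the capset structure together with the fact that $F$ only identifies the values $0$ and $2$ (and never mixes $1$ with anything else) rules this out. The rest is a routine case analysis on the four capset multisets, so no substantial obstacle is expected.
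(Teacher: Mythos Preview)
Your proof is correct and follows essentially the same approach as the paper's. The paper frames the coordinate check contrapositively (a non-sunflower coordinate $\{0,1,1\}$ forces a non-line preimage $\{*,1,1\}$) rather than pushing forward the four line multisets as you do, and it abbreviates your nontriviality argument by citing that $\{0,2\}^{n}$ is a capset; but the underlying case analysis is identical.
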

\begin{proof}
We will show that if $x,y,z\in\mathbb{F}_{3}^{n}$ are not all equal,
and if $F(x),F(y),F(z)$ is not a sunflower, then $x+y+z\neq0\pmod{3}$.
If $F(x)=F(y)=F(z)$, then we cannot have $x+y+z=0$ unless $x=y=z$
since $\{0,2\}^{n}$ is a capset in $\mathbb{F}_{3}^{n}$. If $F(x),F(y),F(z)$
are not all equal, and do not form a sunflower, then there exists
a coordinate $i$ such that $\{F(x_{i}),F(y_{i}),F(z_{i})\}=\{0,1,1\}$.
This implies that $\{x_{i},y_{i},z_{i}\}=\{*,1,1\}$ where $*$ is
either a $0$ or a $2$, and in either case this guarantees that $x+y+z\neq0$.
\end{proof}
Let $B_{k}\subset\{0,1\}^{n}$ denote the set of vectors of weight
$k$. 
\begin{thm}
\label{thm:special_sunflower_upper_bound} If $A\subset B_{k}$ does
not contain a sunflower, we have that 
\[
|A|\leq\frac{\Theta(C)^{n}}{2^{n-k}}
\]
where $\Theta(C)$ is the capset capacity.
\end{thm}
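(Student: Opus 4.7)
The plan is to deduce the bound directly from Lemma \ref{lem:inverse_map_sunflower_capset} by pulling back $A$ through the map $F:\mathbb{F}_{3}^{n}\to\{0,1\}^{n}$ and then counting the size of the preimage. The key observation is that for a vector $x\in\{0,1\}^{n}$ of weight exactly $k$, the fibre $F^{-1}(x)$ has a known size: on each of the $k$ coordinates where $x_i=1$ we are forced to pick $y_i=1$, while on each of the $n-k$ coordinates where $x_i=0$ we may choose $y_i\in\{0,2\}$ freely. Thus $|F^{-1}(x)|=2^{n-k}$ for every $x\in B_{k}$.

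The steps are then as follows. First, I would note that if $A\subset B_{k}$ is sunflower-free, then by Lemma \ref{lem:inverse_map_sunflower_capset} the preimage $F^{-1}(A)\subset\mathbb{F}_{3}^{n}$ is a capset. Second, since the fibres over distinct elements of $A$ are disjoint and each has size $2^{n-k}$, we get the exact identity
\[
|F^{-1}(A)|=2^{n-k}\,|A|.
\]
Third, I would invoke the definition of the capset capacity, $|F^{-1}(A)|\leq C_{n}\leq\Theta(C)^{n}$ (up to the $o(1)$ factor implicit in the $\limsup$), and rearrange to conclude
\[
|A|\leq\frac{\Theta(C)^{n}}{2^{n-k}}.
\]

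There is no serious obstacle here; the proof is essentially a one-line consequence of the preceding lemma together with the fibre count. The only subtle point worth flagging is that the map $F$ collapses $\{0,2\}$ to $0$ coordinate-wise, so different capset elements can map to the same weight-$k$ vector, and it is precisely this $2$-to-$1$-on-each-zero-coordinate behaviour that produces the gain factor $2^{n-k}$. This gain is what ultimately yields, when combined with Lemma \ref{lem:relative_density_lemma} applied to $B=B_{n/3}$, the improvement from $\Theta(C)^{n}$ to $(\tfrac{2}{3}\Theta(C))^{n}$ in Theorem \ref{thm:equilateral_triangles_upper}.
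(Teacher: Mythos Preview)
Your proposal is correct and follows essentially the same argument as the paper: apply Lemma~\ref{lem:inverse_map_sunflower_capset} to conclude that $F^{-1}(A)$ is a capset, observe that each fibre over a weight-$k$ vector has size $2^{n-k}$ so that $|F^{-1}(A)|=2^{n-k}|A|$, and then bound the capset by $\Theta(C)^{n}$. Your parenthetical worry about the $\limsup$ is in fact unnecessary, since the product of two capsets is again a capset, whence $C_{n+m}\geq C_{n}C_{m}$ and Fekete's lemma gives $C_{n}\leq\Theta(C)^{n}$ for every $n$.
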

\begin{proof}
Let $A\subset B_{k}$ be a sunflower-free set. Each element in $A$
has $n-k$, zeros, and so $|F^{-1}(A)|=|A|\cdot2^{n-k}$. The result
follows since Lemma \ref{lem:inverse_map_sunflower_capset} implies
that $|F^{-1}(A)|\leq\Theta(C)^{n}$ since $\Theta(C)^{n}$ bounds
from above the size of the largest capset of size $n$.
\end{proof}
Putting this all together, we prove Theorem \ref{thm:equilateral_triangles_upper}.
\begin{proof}[Proof of Theorem \ref{thm:equilateral_triangles_upper}]
 A set without equilateral triangles in $\mathbb{R}^{n}$ gives rise
to such a set in $\mathbb{R}^{n+m}$ by appending $m$ $0$'s to each
vector, so we may suppose that $3|n$ which can only impact the bound
by at most a factor of $4$. Among the vectors of weight $n/3$, every
sunflower is an equilateral triangle. Since 
\[
|B_{n/3}|=\binom{n}{n/3}=\left(\frac{3}{2^{2/3}}+o(1)\right)^{n},
\]
due to Stirling's approximation, Theorem \ref{thm:special_sunflower_upper_bound}
implies that for $B_{n/3}$,
\[
\delta_{\Delta}(B_{n/3})\leq\left(\frac{\Theta(C)}{2^{2/3}}\right)^{n}\cdot\left(\frac{3}{2^{2/3}}+o(1)\right)^{-n}=\left(\frac{\Theta(C)}{3}+o(1)\right)^{n}.
\]
Lemma \ref{lem:relative_density_lemma} implies that for any $A\subset\{0,1\}^{n}$
that does not contain an equilateral triangle, we have 
\[
|A|\leq2^{n}\delta_{\Delta}(B_{n/3})\leq\left(\frac{2\Theta(C)}{3}+o(1)\right)^{n},
\]
and the result follows. 
\end{proof}

\specialsection*{Acknowledgements}

I would like to thank Lisa Sauermann for her many helpful comments,
and for pointing out an error in the original version of this paper.
I would also like to thank the anonymous referees for their valuable
feedback.

\bibliographystyle{plain}

\begin{thebibliography}{10}

\bibitem{AlweissLovettWuZhang2021ImprovedSunflowers}
Ryan Alweiss, Shachar Lovett, Kewen Wu, and Jiapeng Zhang.
\newblock Improved bounds for the sunflower lemma.
\newblock {\em Ann. of Math. (2)}, 194(3):795--815, 2021.

\bibitem{BellTolsonChueluecha2021NoteOnSunflowerFurtherImprovementsRao}
Tolson Bell, Suchakree Chueluecha, and Lutz Warnke.
\newblock Note on sunflowers.
\newblock {\em Discrete Math.}, 344(7):Paper No. 112367, 3, 2021.

\bibitem{BlasiakChurchCohnGrochowNaslundSawinUmans2016MatrixMultiplication}
Jonah Blasiak, Thomas Church, Henry Cohn, Joshua~A. Grochow, Eric Naslund,
  William~F. Sawin, and Chris Umans.
\newblock On cap sets and the group-theoretic approach to matrix
  multiplication.
\newblock {\em Discrete Anal.}, pages Paper No. 3, 27, 2017.

\bibitem{ChristandlFawziOmarTa2022LargerCornerFreeSets}
Matthias Christandl, Omar Fawzi, Hoang Ta, and Jeroen Zuiddam.
\newblock Larger corner-free sets from combinatorial degenerations.
\newblock In {\em 13th {I}nnovations in {T}heoretical {C}omputer {S}cience
  {C}onference}, volume 215 of {\em LIPIcs. Leibniz Int. Proc. Inform.}, pages
  Art. No. 48, 20. Schloss Dagstuhl. Leibniz-Zent. Inform., Wadern, 2022.

\bibitem{Chung1997OpenProblemsOfPaulErdosGraphTheory}
Fan R.~K. Chung.
\newblock Open problems of {P}aul {E}rd{\H{o}}s in graph theory.
\newblock {\em J. Graph Theory}, 25(1):3--36, 1997.

\bibitem{CrootLevPachZ4}
Ernie Croot, Vsevolod~F. Lev, and P\'eter~P\'al Pach.
\newblock Progression-free sets in {$\mathbb{Z}^n_4$} are exponentially small.
\newblock {\em Ann. of Math. (2)}, 185(1):331--337, 2017.

\bibitem{EllenbergGijswijtCapsets}
Jordan~S. Ellenberg and Dion Gijswijt.
\newblock On large subsets of {$\mathbb{F}^n_q$} with no three-term arithmetic
  progression.
\newblock {\em Ann. of Math. (2)}, 185(1):339--343, 2017.

\bibitem{ErdosMilnerRado1974WeakDeltaSystems}
Paul Erd{\H{o}}s, Eric~Charles Milner, and Richard Rado.
\newblock Intersection theorems for systems of sets. {III}.
\newblock {\em J. Austral. Math. Soc.}, 18:22--40, 1974.
\newblock Collection of articles dedicated to the memory of Hanna Neumann, IX.

\bibitem{ErdosRado1960SunflowerSystems}
Paul Erd{\H{o}}s and Richard Rado.
\newblock Intersection theorems for systems of sets.
\newblock {\em J. London Math. Soc.}, 35:85--90, 1960.

\bibitem{ErdosSzemeredi1978UpperBoundsForSunflowerFreeSets}
Paul Erd{\H{o}}s and Endre Szemer{\'e}di.
\newblock Combinatorial properties of systems of sets.
\newblock {\em J. Combinatorial Theory Ser. A}, 24(3):308--313, 1978.

\bibitem{FranklRodl1987FranklRodlTheorem}
Peter Frankl and Vojt{\v{e}}ch R{\"{o}}dl.
\newblock Forbidden intersections.
\newblock {\em Trans. Amer. Math. Soc.}, 300(1):259--286, 1987.

\bibitem{KleinbergSawinSpeyer2017TheGrowthRateOfTriColoredSumFreeSets}
Robert Kleinberg, David~E. Speyer, and Will Sawin.
\newblock The growth of tri-colored sum-free sets.
\newblock {\em Discrete Anal.}, pages Paper No. 12, 10, 2018.

\bibitem{Kostochka1998ExtremalProblemsDeltaSystems}
Alexandr~V. Kostochka.
\newblock Extremal problems on {$\Delta$}-systems.
\newblock In {\em Numbers, information and complexity ({B}ielefeld, 1998)},
  pages 143--150. Kluwer Acad. Publ., Boston, MA, 2000.

\bibitem{KostochkaRodl1998LowerBoundWeakDealtaSystems}
Alexandr~V. Kostochka and Vojt\v{e}ch R\"{o}dl.
\newblock On large systems of sets with no large weak {$\Delta$}-subsystems.
\newblock {\em Combinatorica}, 18(2):235--240, 1998.

\bibitem{NaslundSawin2017Sunflower}
Eric Naslund and Will Sawin.
\newblock Upper bounds for sunflower-free sets.
\newblock {\em Forum Math. Sigma}, 5:e15, 10, 2017.

\bibitem{Pebody2017ProofOfKleinbergSawinSpeyerConjecture}
Luke Pebody.
\newblock Proof of a conjecture of kleinberg-sawin-speyer.
\newblock {\em ArXiv e-prints}, 2017.
\newblock URL:https://arxiv.org/abs/1608.05740/.

\bibitem{Rao2020CodingForSunflowers}
Anup Rao.
\newblock Coding for sunflowers.
\newblock {\em Discrete Anal.}, pages Paper No. 2, 8, 2020.

\bibitem{Rao2023SunflowerExposition}
Anup Rao.
\newblock Sunflowers: from soil to oil.
\newblock {\em Bull. Amer. Math. Soc. (N.S.)}, 60(1):29--38, 2023.

\bibitem{RodlThoma1997WeakDeltaSystemsLowerBound}
Vojt\v{e}ch R\"{o}dl and Lubo\v{s} Thoma.
\newblock On the size of set systems on {$[n]$} not containing weak
  {$(r,\Delta)$}-systems.
\newblock {\em J. Combin. Theory Ser. A}, 80(1):166--173, 1997.

\bibitem{TaosBlogCapsets}
Terence Tao.
\newblock A symmetric formulation of the croot-lev-pach-ellenberg-gijswijt
  capset bound, 2016.
\newblock
  URL:https://terrytao.wordpress.com/2016/05/18/a-symmetric-formulation-of-the-croot-lev-pach-ellenberg-gijswijt-capset-bound/.

\bibitem{Tao2020BlogSunflowers}
Terence Tao.
\newblock The sunflower lemma via shannon entropy, 2020.
\newblock URL:
  https://terrytao.wordpress.com/2020/07/20/the-sunflower-lemma-via-shannon-entropy/.

\end{thebibliography}

\end{document}